\documentclass[a4paper,10pt]{article}
 
\usepackage{latexsym,amssymb,amsmath,amsthm,mathrsfs,euscript}  

\usepackage[width=14.5cm]{geometry}
\parskip4pt plus2pt minus2pt
 
\newtheorem{thm}{Theorem}
\newtheorem{lem}{Lemma}

\newtheorem{defn}{Definition} 
\newtheorem{rem}{Remark}

\hyphenation{} 

\begin{document}

\title{Fueter's theorem for monogenic functions in biaxial symmetric domains}

\author{Dixan Pe\~na Pe\~na$^{\text{a}}$\\
\small{e-mail: dixanpena@gmail.com}
\and Irene Sabadini$^{\text{a}}$\\
\small{e-mail: irene.sabadini@polimi.it}
\and Franciscus Sommen$^{\text{b}}$\\
\small{e-mail: franciscus.sommen@ugent.be}}

\date{\small{$^\text{a}$Dipartimento di Matematica, Politecnico di Milano\\Via E. Bonardi 9, 20133 Milano, Italy\\\vspace{0.2cm}
$^{\text{b}}$Clifford Research Group, Department of Mathematical Analysis\\Faculty of Engineering and Architecture, Ghent University\\Galglaan 2, 9000 Gent, Belgium}}

\maketitle

\begin{abstract}
\noindent In this paper we generalize the result on Fueter's theorem from \cite{EPVS} by Eelbode et al. to the case of monogenic functions in biaxially symmetric domains. To obtain this result, Eelbode et al. used representation theory methods but their result also follows from a direct calculus we established in our paper \cite{DS2}. In this paper we first generalize \cite{DS2} to the biaxial case and derive the main result from that.\vspace{0.2cm}\\
\noindent\textit{Keywords}: Clifford monogenic functions, Fueter's theorem, Fischer decomposition.\vspace{0.1cm}\\
\textit{Mathematics Subject Classification}: 30G35, 31A05.
\end{abstract}

\section{Introduction}

Let $\mathbb{R}_{m}$ be the real Clifford algebra generated by the standard basis $\{e_1,\ldots,e_m\}$ of the Euclidean space $\mathbb R^m$ (see \cite{Cl,Lou}). The multiplication in this  associative algebra is determined by the relations: $e_j^2=-1$, $e_je_k+e_ke_j=0$, $1\le j\neq k \le m$. Any Clifford number $a\in\mathbb{R}_{m}$ may thus be written as 
\[a=\sum_Aa_Ae_A,\quad a_A\in\mathbb R,\] 
where the basis elements $e_A=e_{j_1}\dots e_{j_k}$ are defined for every subset $A=\{j_1,\dots,j_k\}$ of $\{1,\dots,m\}$ with $j_1<\dots<j_k$ (for $A=\emptyset$ one puts $e_{\emptyset}=1$). 

Observe that $\mathbb R^{m+1}$ may be naturally embedded in $\mathbb R_{m}$ by associating to any element $(X_0,X_1,\ldots,X_m)\in\mathbb R^{m+1}$ the paravector $X_0+\underline X=X_0+\sum_{j=1}^mX_je_j$. Furthermore, by the above multiplication rules it follows that $\underline X^2=-\vert\underline X\vert^2=-\sum_{j=1}^mX_j^2$.

The even and odd subspaces $\mathbb R_{m}^+$, $\mathbb R_{m}^-$ are defined as  
\[\mathbb R_{m}^+=\Biggl\{a\in\mathbb R_{m}:\;a=\sum_{\vert A\vert\;\textrm{even}}a_Ae_A\Biggr\},\;\;\mathbb R_{m}^-=\Biggl\{a\in\mathbb R_{m}:\;a=\sum_{\vert A\vert\;\textrm{odd}}a_Ae_A\Biggr\},\]
where $\vert A\vert=j_1+\dots+j_k$. The subspace $\mathbb R_{m}^+$ is also a subalgebra and we have that
\[\mathbb R_{m}=\mathbb R_{m}^+\oplus\mathbb R_{m}^-.\]
Consider the Dirac operator $\partial_{\underline X}$ in $\mathbb R^m$ given by 
\[\partial_{\underline X}=\sum_{j=1}^me_j\partial_{X_j},\]
which provides a factorization of the Laplacian, i.e. $\partial_{\underline X}^2=-\Delta_{\underline X}=-\sum_{j=1}^m\partial_{X_j}^2$. Functions in the kernel of $\partial_{\underline X}$ are known as monogenic functions (see \cite{BDS,CoSaSo,DSS,GM,GuSp}).

\begin{defn}
A function $F:\Omega\rightarrow\mathbb{R}_{m}$ defined and continuously differentiable in an open set $\Omega\subset\mathbb R^{m}$ is said to be $($left$)$ monogenic in $\Omega$ if $\partial_{\underline X}F(\underline X)=0$, $\underline X\in\Omega$. In a similar way one defines monogenicity with respect to the generalized Cauchy-Riemann operator $\partial_{X_0}+\partial_{\underline X}$ in $\mathbb R^{m+1}$.
\end{defn}

It is clear that monogenic functions are harmonic. Furthermore, for the particular case $m=1$ the equation $(\partial_{X_0}+\partial_{\underline X})F(X_0,\underline X)=0$ is nothing but the classical Cauchy-Riemann system for holomorphic functions. This is not the only connection existing between holomorphic and monogenic functions as the following result shows (see \cite{S3}).

\begin{thm}[Fueter's theorem]\label{Ftthm}
Let $w(z)=u(x,y)+iv(x,y)$ be a holomorphic function in the open subset $\Xi$ of the upper half-plane and assume that $P_K(\underline X)$ is a homogeneous monogenic polynomial of degree $K$ in $\mathbb R^m$. If $m$ is odd, then the function
\begin{equation}\label{Ftmapp}
\big(\partial_{X_0}^2+\Delta_{\underline X}\big)^{K+\frac{m-1}{2}}\left[\left(u(X_0,\vert\underline X\vert)+\frac{\underline X}{\vert\underline X\vert}\,v(X_0,\vert\underline X\vert)\right)P_K(\underline X)\right]
\end{equation}
is monogenic in $\Omega=\big\{(X_0,\underline X)\in\mathbb R^{m+1}:\;(X_0,\vert\underline X\vert)\in\Xi\big\}$.
\end{thm}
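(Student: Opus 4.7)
Setting $r=|\underline X|$ and $\omega=\underline X/r$ (so that $\omega^{2}=-1$), I would write
\[
F(X_0,\underline X):=\bigl(u(X_0,r)+\omega\,v(X_0,r)\bigr)P_K(\underline X),
\]
and note that the constant-coefficient operator $\Delta_{m+1}:=\partial_{X_0}^{2}+\Delta_{\underline X}$ commutes with $\partial_{X_0}+\partial_{\underline X}$. Proving monogenicity of the function in (\ref{Ftmapp}) is therefore equivalent to verifying
\[
\Delta_{m+1}^{K+\frac{m-1}{2}}\bigl((\partial_{X_0}+\partial_{\underline X})F\bigr)=0.
\]
My plan is first to evaluate $(\partial_{X_0}+\partial_{\underline X})F$ in closed form, and then to reduce the surviving iterated Laplacian to a purely two-variable identity in $(X_0,r)$.

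\textbf{First-order computation.} Using $\partial_{\underline X}P_K=0$, Euler's relation $\sum_{j}X_j\partial_{X_j}P_K=KP_K$, and the anti-commutation identity $\partial_{\underline X}\underline X+\underline X\partial_{\underline X}=-m-2\sum_{j}X_j\partial_{X_j}$, I would first derive for any scalar $g=g(X_0,r)$ the two formulas
\[
\partial_{\underline X}(g\,P_K)=\omega\,(\partial_r g)\,P_K,\qquad \partial_{\underline X}(\omega\,g\,P_K)=-\Bigl(\partial_r g+\tfrac{2K+m-1}{r}\,g\Bigr)P_K.
\]
Combining these with the Cauchy--Riemann equations $u_{X_0}=v_r$ and $u_r=-v_{X_0}$, the $\omega$-valued contributions should cancel, leaving
\[
(\partial_{X_0}+\partial_{\underline X})F=-(2K+m-1)\,\frac{v(X_0,r)}{r}\,P_K(\underline X).
\]
The theorem thus reduces to the claim $\Delta_{m+1}^{K+(m-1)/2}\bigl[(v/r)\,P_K\bigr]=0$.

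\textbf{Scalar reduction.} Next I would use $\Delta_{\underline X}P_K=0$ together with $\underline X\cdot\nabla_{\underline X}P_K=KP_K$ to check that for any scalar $g=g(X_0,r)$,
\[
\Delta_{m+1}\bigl[g(X_0,r)\,P_K(\underline X)\bigr]=(L_{\beta}g)\,P_K(\underline X),\qquad L_{\beta}:=\partial_{X_0}^{2}+\partial_r^{2}+\frac{\beta-1}{r}\,\partial_r,
\]
where $\beta:=m+2K$. The remaining task is thus the scalar identity
\[
L_{\beta}^{(\beta-1)/2}\!\left(\frac{v(X_0,r)}{r}\right)=0,
\]
which must hold for every $v$ arising as the imaginary part of a holomorphic function of $X_0+ir$; crucially, such $v$ satisfies $v_{X_0X_0}+v_{rr}=0$.

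\textbf{Main obstacle and conclusion.} The hard step is this last scalar identity, where I expect the bulk of the technical work to be concentrated. A short calculation using the 2D harmonicity of $v$ should give $L_\beta(v/r)=((\beta-3)/r)\,(v/r)_r$, and iterating should produce expressions of the form $v_r/r^{2j}$ and $v/r^{2j+1}$ whose coefficients contain the product $(\beta-3)(\beta-5)\cdots(\beta-(2j+1))$; since $\beta=m+2K$ is odd, one of these factors must vanish exactly at the $(\beta-1)/2$-th step. Organising this induction cleanly is the main obstacle, because one has to track how the various terms conspire to retain exactly the vanishing factor. Two natural approaches are (i) to develop, as in \cite{DS2}, a systematic calculus for $\Delta_{m+1}^{n}$ acting on axial products $g(X_0,r)P_K(\underline X)$, and (ii) to reduce first to the model case $w(z)=z^{n}$ by a Taylor expansion argument, where the iterated Laplacian can be computed explicitly and the vanishing read off from the coefficients; the general case then follows by linearity and analyticity.
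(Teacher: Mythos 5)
The paper itself gives no proof of Theorem~\ref{Ftthm}; it is quoted from the literature, and the explicit closed form stated just after it signals the proof strategy of \cite{DS2} that the present paper adapts to the biaxial case: compute $\Delta_{m+1}^{n}$ of the whole axial expression $(u+\omega v)P_K$ in closed form via a Leibniz-type identity in the operators $(r^{-1}\partial_r)^n$ and $(\partial_r\,r^{-1})^n$ (the analogue of Lemma~\ref{mainident}), observe that at $n=K+\frac{m-1}{2}$ a single term survives, and then check a first-order Vekua-type system for the resulting pair. Your route is genuinely different and arguably more economical: you push the Cauchy--Riemann operator through $\Delta_{m+1}^{K+(m-1)/2}$ (legitimate, since the two commute), use the Cauchy--Riemann equations to collapse $(\partial_{X_0}+\partial_{\underline X})F$ to $-\frac{2K+m-1}{r}\,v\,P_K$, and reduce the theorem to the two-variable statement $L_\beta^{(\beta-1)/2}(v/r)=0$ with $\beta=m+2K$. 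Your first-order identities for $\partial_{\underline X}(gP_K)$ and $\partial_{\underline X}(\omega gP_K)$ and the scalar reduction $\Delta_{m+1}(gP_K)=(L_\beta g)P_K$ are all correct.

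There is, however, one real gap: you leave the key scalar identity unproved and even worry that the induction may not ``track cleanly.'' It does, and the observation you are missing is the one that makes it clean. A direct computation using $\Delta_2 v=0$ gives $L_3(v/r)=0$, and then Lemma~\ref{mopedp}\,{\rm(i)} (or a two-line induction) shows that $L_{3+2j}\bigl((r^{-1}\partial_r)^{j}(v/r)\bigr)=0$ for all $j\ge 0$. Setting $g_j=(r^{-1}\partial_r)^{j}(v/r)$ one therefore gets
\[
L_\beta g_j=\Bigl(L_{3+2j}+\tfrac{\beta-3-2j}{r}\,\partial_r\Bigr)g_j=(\beta-3-2j)\,g_{j+1},
\]
so $L_\beta^{n}(v/r)=\Bigl(\prod_{j=0}^{n-1}(\beta-3-2j)\Bigr)(r^{-1}\partial_r)^{n}(v/r)$, and the product vanishes precisely at $n=(\beta-1)/2$ because $\beta$ is odd. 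With that lemma inserted, your proof is complete and self-contained; both of the fallback plans you mention (developing the full \cite{DS2} calculus, or specialising to $w=z^n$) would also work but are heavier than what is needed once the single-factor recursion above is noticed.
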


The idea of using holomorphic functions to construct monogenic functions was first presented by Fueter \cite{Fue} in the setting of quaternionic analysis ($m=3$, $K=0$) and for that reason Theorem \ref{Ftthm} bears his name. In 1957 Sce \cite{Sce} extended Fueter's idea to Clifford analysis by proving the validity of the above result for the case $K=0$, $m$ odd. Forty years later Qian \cite{Q} showed that a similar result holds when $m$ is even. In the last years several articles have been published on this topic (see e.g. \cite{CSDPF,CoSaF,CoSaF2,CoSaF3,HDPS,EPVS,KQS,DS1,QS}). For more information we refer the reader to the survey article \cite{Q2}.

Consider the biaxial decomposition $\mathbb R^{m}=\mathbb R^{p}\oplus\mathbb R^{q}$, $p+q=m$. In this way, for any $\underline X\in\mathbb R^{m}$ we may write  
\[\underline X=\underline x+\underline y,\] 
where $\underline x=\sum_{j=1}^{p}x_je_j$ and $\underline y=\sum_{j=1}^{q}x_{p+j}e_{p+j}$. We shall denote by $\mathbb R_{p}$ and $\mathbb R_{q}$ the real Clifford algebras constructed over $\mathbb R^{p}$ and $\mathbb R^{q}$ respectively, i.e.
\[\mathbb R_{p}=\text{Alg}_{\mathbb R}\big\{e_1,\ldots,e_p\},\quad\mathbb R_{q}=\text{Alg}_{\mathbb R}\big\{e_{p+1},\ldots,e_m\}.\] 

In this paper we further investigate the following generalization of Fueter's theorem to the biaxial case (see \cite{DQiS,DxS,QS}). We note that in this setting there is a slight change regarding the initial function $w$. Namely, $w$ will be assumed to be antiholomorphic, i.e. a solution of $\partial_zw=0$, where $\partial_z=\frac{1}{2}(\partial_{x}-i\partial_{y})$.  

\begin{thm}\label{Ftthmbiaxial}
Let $w(\overline z)=u(x,y)+iv(x,y)$ be an antiholomorphic function in an open subset of $\{(x,y)\in\mathbb R^2:\;x,y>0\}$. Suppose that $P_k(\underline x):\mathbb R^{p}\rightarrow\mathbb{R}_{p}$ and $P_\ell(\underline y):\mathbb R^{q}\rightarrow\mathbb{R}_{q}$ are homogeneous monogenic polynomials. If $p$ and $q$ are odd, then the functions 
\begin{equation*}
\mathsf{Ft}_{p,q}^{+}\left[w(\overline z),P_k(\underline x),P_\ell(\underline y)\right](\underline X)=\Delta_{\underline X}^{k+\ell+\frac{m-2}{2}}\left[\left(u(\vert\underline x\vert,\vert\underline y\vert)+\frac{\underline x\,\underline y}{\vert\underline x\vert\vert\underline y\vert}\,v(\vert\underline x\vert,\vert\underline y\vert)\right)P_k(\underline x)P_\ell(\underline y)\right]
\end{equation*}
\begin{equation*}
\mathsf{Ft}_{p,q}^{-}\left[w(\overline z),P_k(\underline x),P_\ell(\underline y)\right](\underline X)=\Delta_{\underline X}^{k+\ell+\frac{m-2}{2}}\left[\left(\frac{\underline x}{\vert\underline x\vert}\,u(\vert\underline x\vert,\vert\underline y\vert)+\frac{\underline y}{\vert\underline y\vert}\,v(\vert\underline x\vert,\vert\underline y\vert)\right)P_k(\underline x)P_\ell(\underline y)\right]
\end{equation*}
are monogenic.
\end{thm}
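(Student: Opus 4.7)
The plan is to follow the strategy announced in the abstract: first generalise the direct-calculus approach of \cite{DS2} to the biaxial setting, and then deduce Theorem \ref{Ftthmbiaxial} as a consequence.

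I would begin by decomposing the Dirac operator as $\partial_{\underline X} = \partial_{\underline x} + \partial_{\underline y}$. Because $e_1,\ldots,e_p$ anticommute with $e_{p+1},\ldots,e_m$, the operators $\partial_{\underline x}$ and $\partial_{\underline y}$ anticommute, so $\partial_{\underline X}^2 = \partial_{\underline x}^2 + \partial_{\underline y}^2$ and hence $\Delta_{\underline X} = \Delta_{\underline x} + \Delta_{\underline y}$. The basic atoms of the calculus are functions of the four shapes
$$F(r,s)\,P_k(\underline x)P_\ell(\underline y),\quad \tfrac{\underline x}{r}G(r,s)P_k(\underline x)P_\ell(\underline y),\quad \tfrac{\underline y}{s}H(r,s)P_k(\underline x)P_\ell(\underline y),\quad \tfrac{\underline x\,\underline y}{rs}K(r,s)P_k(\underline x)P_\ell(\underline y),$$
with $r=|\underline x|$ and $s=|\underline y|$. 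Using the monogenicity of $P_k,P_\ell$ together with the Euler-type identities $\partial_{\underline x}(\underline x P_k)=-(2k+p)P_k$ and $\partial_{\underline y}(\underline y P_\ell)=-(2\ell+q)P_\ell$, one checks that $\partial_{\underline X}$, and hence $\Delta_{\underline X}$, preserves this four-dimensional space of atoms and produces explicit recursions involving the radial second-order operators $\partial_r^2+\tfrac{\alpha}{r}\partial_r$ and $\partial_s^2+\tfrac{\beta}{s}\partial_s$ with exponents $\alpha,\beta$ that shift by fixed amounts depending on $p,q,k,\ell$.

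Once this biaxial version of \cite{DS2} is in place, the theorem reduces to a bookkeeping calculation on the radial functions. Each application of $\Delta_{\underline X}$ raises the pair of exponents by two, and the choice $N=k+\ell+\tfrac{m-2}{2}$ is dictated exactly so that after $N$ iterations the coefficients match the Vekua-type system solved by $u(r,s)$ and $v(r,s)$. The antiholomorphy condition $\partial_z w=0$ gives $\partial_x u=\partial_y v$ and $\partial_y u=-\partial_x v$; combined with the anticommutation of $\underline x$ and $\underline y$ inside $\mathbb R_m$, these relations supply the precise cancellations needed to conclude $\partial_{\underline X}\mathsf{Ft}_{p,q}^{\pm}[w,P_k,P_\ell]=0$, with the sign distinguishing $\mathsf{Ft}^{+}$ from $\mathsf{Ft}^{-}$ tracking which of the two Clifford involutions $\underline x\,\underline y$ versus $\underline x/r,\underline y/s$ appears in the ansatz.

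The main obstacle is precisely the biaxial bookkeeping. In the classical axial case treated in \cite{DS2}, the calculus closes on just two atoms $F$ and $\tfrac{\underline X}{|\underline X|}G$; here each application of $\partial_{\underline x}$ or $\partial_{\underline y}$ can push the Clifford factor $\underline x\,\underline y$ past one of the radial factors and must be tracked through the non-commutative product. Identifying the correct stable four-dimensional space, pinning down the recursion coefficients so that the shift pattern matches, and verifying that the oddness of $p$ and $q$ guarantees the iteration terminates with integer exponents (the same mechanism that forces the odd-dimensional hypothesis in Sce's theorem) is the delicate algebraic step. Once this has been done, the extraction of monogenicity from the antiholomorphic Cauchy--Riemann equations is essentially mechanical.
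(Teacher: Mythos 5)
Your proposal follows essentially the same route as the paper: expand $\Delta_{\underline X}^n$ on radial atoms times $P_k(\underline x)P_\ell(\underline y)$ (what the paper packages as Lemma~\ref{mainident}), observe that at the critical exponent $k+\ell+\frac{m-2}{2}$ the oddness of $p,q$ collapses the expansion to a single surviving term, and then check that the two remaining radial coefficients solve the Vekua-type system characterizing biaxial monogenic functions, with antiholomorphy of $w$ supplying exactly the Cauchy--Riemann relations needed to close. The paper in fact proves the stronger Theorem~\ref{Ftthhigh} (initial functions with $\partial_z\Delta_{x,y}^{\mu}w=0$) and obtains Theorem~\ref{Ftthmbiaxial} as the case $\mu=0$, but the mechanism is the one you describe. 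One small slip: since $\partial_z=\tfrac12(\partial_x-i\partial_y)$, the condition $\partial_z(u+iv)=0$ yields $\partial_x u=-\partial_y v$ and $\partial_y u=\partial_x v$ (the opposite signs to those you wrote); it is this corrected system that feeds into the Vekua equations of Remark~\ref{com2}.
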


It is remarkable that Theorem \ref{Ftthm} is still true if $P_K(\underline X)$ is replaced by a homogeneous monogenic polynomial $P_K(X_0,\underline X)$ in $\mathbb R^{m+1}$ (see \cite{DS2}), or if the monogenicity condition on $P_K(\underline X)$ is dropped. The latter result was proved in \cite{EPVS} with the help of representation theory, but it can also be derived using the results obtained in \cite{DS2}.       

Motivated by \cite{EPVS} and using similar methods as in \cite{DS2}, we prove in this paper that Theorem \ref{Ftthmbiaxial} also holds if $P_k(\underline x)$ and $P_\ell(\underline y)$ are assumed to be only homogeneous polynomials.

\section{A higher order version of Theorem \ref{Ftthmbiaxial}}

The goal in this section is to generalize Theorem \ref{Ftthmbiaxial} to a larger class of initial functions. More precisely, we shall assume that $w(z,\overline z)=u(x,y)+iv(x,y)$ is a solution of the equation 
\begin{equation}\label{initfucnt}
\partial_z\Delta_{x,y}^{\mu}w(z,\overline z)=0,\quad \Delta_{x,y}=\partial_{x}^2+\partial_{y}^2,\quad\mu\in\mathbb N_0.
\end{equation}
In particular, poly-antiholomorphic functions of order $\mu+1$ (i.e. solutions of $\partial_z^{\mu+1}w(z,\overline z)=0$) clearly satisfy equation (\ref{initfucnt}).

It is possible to compute in explicit form the monogenic function produced by Theorem \ref{Ftthm} using the differential operators 
\begin{equation}\label{specOpe}
\left(x^{-1}\frac{d}{dx}\right)^n,\quad\quad\left(\frac{d}{dx}\,x^{-1}\right)^n,\quad n\ge 0.
\end{equation}
Namely, function (\ref{Ftmapp}) equals 
\begin{equation*}
(2K+m-1)!!\left(\left(R^{-1}\partial_R\right)^{K+\frac{m-1}{2}}u(X_0,R)+\frac{\underline X}{R}\,\left(\partial_R\,R^{-1}\right)^{K+\frac{m-1}{2}}v(X_0,R)\right)P_K(\underline X),
\end{equation*} 
where $R=\vert\underline X\vert$ (see \cite{DQiS,DS1}). 

The differential operators in (\ref{specOpe}) possess interesting properties (see \cite{HDPS,DQiS,DS1}) and in this paper we shall use the following. 

\begin{lem}\label{mopedp}
If $f:\mathbb R\rightarrow\mathbb R$ is a infinitely differentiable function, then
\begin{itemize}
\item[{\rm(i)}] $\displaystyle{\frac{d^2}{dx^2}\left(x^{-1}\frac{d}{dx}\right)^n f(x)=\left(x^{-1}\frac{d}{dx}\right)^n\frac{d^2}{dx^2}f(x)-2n\left(x^{-1}\frac{d}{dx}\right)^{n+1}f(x)}$, 
\item[{\rm(ii)}] $\displaystyle{\frac{d^2}{dx^2}\left(\frac{d}{dx}\,x^{-1}\right)^n f(x)=\left(\frac{d}{dx}\,x^{-1}\right)^n\frac{d^2}{dx^2}f(x)-2n\left(\frac{d}{dx}\,x^{-1}\right)^{n+1}f(x)}$,
\item[{\rm(iii)}] $\displaystyle{\left(\frac{d}{dx}\,x^{-1}\right)^n\frac{d}{dx}f(x)=\frac{d}{dx}\left(x^{-1}\frac{d}{dx}\right)^n f(x)}$,  
\item[{\rm(iv)}] $\displaystyle{\left(x^{-1}\frac{d}{dx}\right)^n\frac{d}{dx}f(x)-\frac{d}{dx}\left(\frac{d}{dx}\,x^{-1}\right)^n f(x)=2nx^{-1}\left(\frac{d}{dx}\,x^{-1}\right)^n f(x)}$.
\end{itemize}
\end{lem}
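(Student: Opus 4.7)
My plan is to prove all four identities by induction on $n$, bootstrapping from a small set of preliminary operator identities. Writing $A = x^{-1}\frac{d}{dx}$ and $B = \frac{d}{dx}\,x^{-1}$, I would first establish by direct computation the following: (a) $A\,x^{-1} = x^{-1}B$, which is immediate from the definition of $B$; (b) $B\frac{d}{dx} = \frac{d}{dx}A$, since both expressions equal $\frac{d}{dx}\,x^{-1}\frac{d}{dx}$; and (c) the three first-order identities $[\frac{d^2}{dx^2},A] = -2A^2$, $[\frac{d^2}{dx^2},B] = -2B^2$, and $A\frac{d}{dx} - \frac{d}{dx}B = 2x^{-1}B$, each obtained by expanding the composition on a smooth test function and collecting terms.

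Item (iii) falls out first. Its $n=1$ case is just (b), and the inductive step is one line: $B^{n+1}\frac{d}{dx} = B\cdot B^{n}\frac{d}{dx} = B\frac{d}{dx}A^{n} = \frac{d}{dx}A\cdot A^{n} = \frac{d}{dx}A^{n+1}$.

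For (i), I would combine the commutator $[\frac{d^2}{dx^2},A] = -2A^2$ with the induction hypothesis: $\frac{d^2}{dx^2}A^{n+1}f = A\frac{d^2}{dx^2}(A^{n}f) - 2A^{n+2}f = A\bigl(A^{n}\frac{d^2}{dx^2}f - 2nA^{n+1}f\bigr) - 2A^{n+2}f = A^{n+1}\frac{d^2}{dx^2}f - 2(n+1)A^{n+2}f$. The proof of (ii) is word-for-word identical with $B$ in place of $A$.

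Finally, for (iv), combining the induction hypothesis with (a) and (c) yields $A^{n+1}\frac{d}{dx} - \frac{d}{dx}B^{n+1} = A\bigl(\frac{d}{dx}B^{n} + 2nx^{-1}B^{n}\bigr) - \frac{d}{dx}B\cdot B^{n} = \bigl(A\frac{d}{dx} - \frac{d}{dx}B\bigr)B^{n} + 2nA\,x^{-1}B^{n} = 2x^{-1}B^{n+1} + 2nx^{-1}B^{n+1}$, which is the desired claim. I do not anticipate any serious obstacle: all four identities rest on the same handful of first-order commutation relations, and the main effort is careful bookkeeping of operator ordering, in particular making sure that commutators such as $[\frac{d^2}{dx^2},x^{-1}]$ are correctly rewritten as $2x^{-1}B$ rather than $2Bx^{-1}$, and that (a) is applied with the factor $x^{-1}$ on the correct side to turn $Ax^{-1}B^{n}$ into $x^{-1}B^{n+1}$ in the final step.
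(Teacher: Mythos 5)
Your proof is correct. Note, though, that the paper does not actually prove Lemma \ref{mopedp}: it simply cites \cite{HDPS,DQiS,DS1} for these operator identities, so there is no in-paper argument to compare against. Your derivation---setting $A=x^{-1}\tfrac{d}{dx}$, $B=\tfrac{d}{dx}\,x^{-1}$, verifying the three first-order relations $[\tfrac{d^2}{dx^2},A]=-2A^2$, $[\tfrac{d^2}{dx^2},B]=-2B^2$, $A\tfrac{d}{dx}-\tfrac{d}{dx}B=2x^{-1}B$ together with the intertwiners $Ax^{-1}=x^{-1}B$ and $B\tfrac{d}{dx}=\tfrac{d}{dx}A$, and then inducting---is the natural route and is airtight as written. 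I checked each of the base identities by expansion on a smooth $f$ and each inductive step: (iii) follows in one line from $B\tfrac{d}{dx}=\tfrac{d}{dx}A$; (i) and (ii) follow from the commutator identities; and in (iv) the two terms $2x^{-1}B^{n+1}$ and $2nx^{-1}B^{n+1}$ combine correctly, with $Ax^{-1}=x^{-1}B$ applied with $x^{-1}$ on the left as you flag. No gaps.
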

\noindent
Due to the decomposition $\mathbb R^{m}=\mathbb R^{p}\oplus\mathbb R^{q}$ it is convenient to split $\partial_{\underline X}$ and $\Delta_{\underline X}$ as 
\begin{align*}
\partial_{\underline X}&=\partial_{\underline x}+\partial_{\underline y}=\sum_{j=1}^pe_j\partial_{x_j}+\sum_{j=1}^qe_{p+j}\partial_{x_{p+j}},\\
\Delta_{\underline X}&=\Delta_{\underline x}+\Delta_{\underline y}=\sum_{j=1}^p\partial_{x_j}^2+\sum_{j=1}^q\partial_{x_{p+j}}^2.
\end{align*}
Furthermore, for any $\underline x\in\mathbb R^p$ and $\underline y\in\mathbb R^q$ we put 
\begin{align*}
\underline\omega&=\underline x/r,\quad r=\vert\underline x\vert,\\
\underline\nu&=\underline y/\rho,\quad \rho=\vert\underline y\vert.
\end{align*}
In this section, like in Theorem \ref{Ftthmbiaxial}, we assume that $P_k(\underline x):\mathbb R^{p}\rightarrow\mathbb{R}_{p}$ and $P_\ell(\underline y):\mathbb R^{q}\rightarrow\mathbb{R}_{q}$ are homogeneous monogenic polynomials. It is convenient to make a few observations about these polynomials. 

\begin{rem}\label{rempolins}
First, note that $P_k(\underline x)$ can be uniquely written in the form $P_k(\underline x)=P_k^+(\underline x)+P_k^-(\underline x)$, where $P_k^+(\underline x)$, $P_k^-(\underline x)$ take values in $\mathbb{R}_{p}^+$, $\mathbb{R}_{p}^-$ respectively. Since $\partial_{\underline x}P_k^+(\underline x)\in\mathbb{R}_{p}^-$, $\partial_{\underline x}P_k^-(\underline x)\in\mathbb{R}_{p}^+$ for $\underline x\in\mathbb R^{p}$, one can conclude that $P_k(\underline x)$ is monogenic if and only if both components $P_k^+(\underline x)$ and $P_k^-(\underline x)$ are monogenic. Of course, a similar remark holds for $P_\ell(\underline y)$. 
\end{rem}
\noindent
Let $\Delta_2=\partial_{r}^2+\partial_{\rho}^2$ be the two-dimensional Laplacian in the variables $(r,\rho)$ and recall the definition of a multinomial coefficient
\[\binom{n}{j_1,j_2,\dots,j_s}=\frac{n!}{j_1!\,j_2!\cdots j_s!}.\]
Consider the function $D:\mathbb N_0\times\mathbb N_0\rightarrow\mathbb Z$ satisfying
\begin{align*}
D(0,0)&=1,\quad D(j_1,j_2)=D(j_1,0)D(0,j_2),\;\; j_1,j_2\ge1\\
D(j,0)&=\prod_{s=1}^{j}\big(2k+p-(2s-1)\big),\quad D(0,j)=\prod_{s=1}^{j}\big(2\ell+q-(2s-1)\big),\quad j\ge1.
\end{align*}

\begin{lem}\label{mainident}
Suppose that $h:\mathbb R^2\rightarrow\mathbb R$ is an infinitely differentiable function in an open subset of $\{(x,y)\in\mathbb R^2:\;x,y>0\}$. Then for $n\in\mathbb N$ and $s_1,s_2\in\{0,1\}$ it holds that
\begin{multline}\label{indefund}
\Delta_{\underline X}^{n}\Big(h(r,\rho)\,\underline\omega^{s_1}\underline\nu^{s_2}P_k(\underline x)P_\ell(\underline y)\Big)=\\
\left(\sum_{\substack{j_1+j_2\le n\\j_1,j_2\ge 0}}\binom{n}{j_1,j_2,n-j_1-j_2}D(j_1,j_2)W_{j_1,j_2}^{s_1,s_2}(r,\rho)\right)\underline\omega^{s_1}\underline\nu^{s_2}P_k(\underline x)P_\ell(\underline y),
\end{multline}
where
\[W_{j_1,j_2}^{0,0}(r,\rho)=\left(r^{-1}\partial_r\right)^{j_1}\left(\rho^{-1}\partial_{\rho}\right)^{j_2}\Delta_2^{n-j_1-j_2}h(r,\rho),\]
\[W_{j_1,j_2}^{1,0}(r,\rho)=\left(\partial_r\,r^{-1}\right)^{j_1}\left(\rho^{-1}\partial_{\rho}\right)^{j_2}\Delta_2^{n-j_1-j_2}h(r,\rho),\]
\[W_{j_1,j_2}^{0,1}(r,\rho)=\left(r^{-1}\partial_r\right)^{j_1}\left(\partial_{\rho}\,\rho^{-1}\right)^{j_2}\Delta_2^{n-j_1-j_2}h(r,\rho),\]
\[W_{j_1,j_2}^{1,1}(r,\rho)=\left(\partial_r\,r^{-1}\right)^{j_1}\left(\partial_{\rho}\,\rho^{-1}\right)^{j_2}\Delta_2^{n-j_1-j_2}h(r,\rho).\]
\end{lem}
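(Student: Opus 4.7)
The plan is to prove (\ref{indefund}) by induction on $n\ge1$. The base case $n=1$ is a direct computation, and the inductive step applies $\Delta_{\underline X}$ termwise to the right-hand side at level $n$, uses Lemma \ref{mopedp}(i)--(ii) to commute $\Delta_2$ past the iterated operators inside each $W^{s_1,s_2}_{j_1,j_2}$, and finally reassembles the result via the standard multinomial Pascal identity.

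For the base case $n=1$, split $\Delta_{\underline X}=\Delta_{\underline x}+\Delta_{\underline y}$ and apply the Leibniz rule to $h(r,\rho)\underline\omega^{s_1}\underline\nu^{s_2}P_kP_\ell$. Monogenicity yields $\Delta_{\underline x}P_k=0$ and $\Delta_{\underline y}P_\ell=0$, while the cross-derivative term $2\sum_{j}(\partial_{x_{p+j}}h)(\partial_{x_{p+j}}P_\ell)$ reduces to a scalar multiple of $P_\ell$ through Euler's identity $\sum_{j}x_{p+j}\partial_{x_{p+j}}P_\ell=\ell P_\ell$, and symmetrically for the $\underline x$ cross-term. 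The standard radial computation for $\Delta_{\underline x}$ applied to $h(r,\rho)\underline\omega^{s_1}$ (where $\underline\omega^2=-1$ forces $r^{-1}\partial_r$ to be replaced by $\partial_r\,r^{-1}$ in the odd case $s_1=1$), combined with its $\underline y$-analog, gives exactly the $n=1$ case of (\ref{indefund}), matching the values $D(1,0)=2k+p-1$ and $D(0,1)=2\ell+q-1$.

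For the inductive step, applying $\Delta_{\underline X}$ to each summand $W^{s_1,s_2}_{j_1,j_2}\underline\omega^{s_1}\underline\nu^{s_2}P_kP_\ell$ produces, via the $n=1$ formula with $h$ replaced by $W^{s_1,s_2}_{j_1,j_2}$, the three contributions
\[
\bigl[\Delta_2 W^{s_1,s_2}_{j_1,j_2}+D(1,0)\,U^r_{s_1}W^{s_1,s_2}_{j_1,j_2}+D(0,1)\,U^\rho_{s_2}W^{s_1,s_2}_{j_1,j_2}\bigr]\underline\omega^{s_1}\underline\nu^{s_2}P_kP_\ell,
\]
where $U^r_0=r^{-1}\partial_r$, $U^r_1=\partial_r\,r^{-1}$ (analogously in $\rho$). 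By construction, $U^r_{s_1}W^{s_1,s_2}_{j_1,j_2}$ and $U^\rho_{s_2}W^{s_1,s_2}_{j_1,j_2}$ are exactly the level-$(n+1)$ functions $W^{s_1,s_2}_{j_1+1,j_2}$ and $W^{s_1,s_2}_{j_1,j_2+1}$. The factor $\Delta_2 W^{s_1,s_2}_{j_1,j_2}$ is rewritten using Lemma \ref{mopedp}(i) when $s_1=0$ (resp. $s_2=0$) and Lemma \ref{mopedp}(ii) when $s_1=1$ (resp. $s_2=1$), so that each commutation produces the level-$(n+1)$ function $W^{s_1,s_2}_{j_1,j_2}$ together with corrections $-2j_1W^{s_1,s_2}_{j_1+1,j_2}-2j_2W^{s_1,s_2}_{j_1,j_2+1}$, all at level $n+1$.

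Collecting contributions to $D(J_1,J_2)W^{s_1,s_2}_{J_1,J_2}$ at level $n+1$, three sources appear: the indices $(J_1,J_2)$, $(J_1-1,J_2)$, and $(J_1,J_2-1)$ at level $n$. The identity
\[
D(J_1-1,J_2)\bigl[2k+p-(2J_1-1)\bigr]=D(J_1,J_2),
\]
immediate from the product definition of $D$, and its analog in the second index, factor $D(J_1,J_2)$ cleanly out of all three contributions, leaving precisely the classical multinomial Pascal identity
\[
\binom{n+1}{J_1,J_2,n+1-J_1-J_2}=\binom{n}{J_1,J_2,n-J_1-J_2}+\binom{n}{J_1-1,J_2,n-J_1-J_2+1}+\binom{n}{J_1,J_2-1,n-J_1-J_2+1},
\]
which closes the induction. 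The main obstacle is exactly this coefficient bookkeeping: one must verify that the $-2j_1,-2j_2$ corrections from Lemma \ref{mopedp} combine with the base-step constants $D(1,0),D(0,1)$ to shift $D(j_1,j_2)$ to $D(j_1+1,j_2)$ and $D(j_1,j_2+1)$ uniformly across all four parity cases $(s_1,s_2)\in\{0,1\}^2$, and that the form $(\cdot)\underline\omega^{s_1}\underline\nu^{s_2}P_kP_\ell$ is genuinely preserved throughout — this last point relies crucially on Euler's identity together with the monogenicity (hence harmonicity) of $P_k$ and $P_\ell$.
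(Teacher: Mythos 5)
Your proof is correct and follows essentially the same route as the paper: establish the $n=1$ case by the radial Laplacian computation plus Euler's identity (with harmonicity/monogenicity killing the extra terms), then close the induction by applying the $n=1$ formula termwise and using Lemma~\ref{mopedp}(i)--(ii) to commute $\Delta_2$ past the iterated operators, with the $-2j_i$ corrections and the factors $D(1,0)$, $D(0,1)$ combining to shift $D(j_1,j_2)$ up by one index and the multinomial Pascal identity absorbing the coefficients. The paper writes out only the $s_1=s_2=0$ case and declares the rest similar, whereas you indicate how the four parity cases run in parallel with $r^{-1}\partial_r\leftrightarrow\partial_r r^{-1}$ swapped according to $s_1,s_2$; this is a reasonable, slightly more explicit presentation of the same argument.
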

\begin{proof} 
We shall prove the case $s_1=s_2=0$ using induction. The other cases can be proved similarly. First, note that
\begin{align*}
\partial_{\underline x}h&=\sum_{j=1}^pe_j\partial_{x_j}h=\sum_{j=1}^pe_j\big(\partial_rh\big)\big(\partial_{x_j}r\big)=\underline\omega\,\partial_rh
\end{align*}
and hence
\begin{align*}
\Delta_{\underline x}h&=-\partial_{\underline x}^2h=-\partial_{\underline x}\big(\underline\omega\,\partial_rh\big)=-\underline\omega^2\partial_r^2h-\big(\partial_{\underline x}\,\underline\omega\big)\big(\partial_rh\big)\\
&=\partial_r^2h+\frac{p-1}{r}\,\partial_rh.
\end{align*}
From this equality and using Euler's theorem for homogeneous functions we obtain 
\begin{align*}
\Delta_{\underline x}\big(hP_k\big)&=\big(\Delta_{\underline x}h\big)P_k+2\sum_{j=1}^p\big(\partial_{x_j}h\big)\big(\partial_{x_j}P_k\big)+h\big(\Delta_{\underline x}P_k\big)\\
&=\left(\partial_r^2h+\frac{p-1}{r}\,\partial_rh\right)P_k+2\frac{\partial_rh}{r}\sum_{j=1}^px_j\partial_{x_j}P_k=\left(\partial_r^2h+\frac{2k+p-1}{r}\,\partial_rh\right)P_k.
\end{align*}
In a similar way one also get 
\[\Delta_{\underline y}\big(hP_\ell\big)=\left(\partial_{\rho}^2h+\frac{2\ell+q-1}{\rho}\,\partial_{\rho}h\right)P_\ell.\]
These equalities then yield 
\begin{equation}\label{primeeq}
\Delta_{\underline X}\big(hP_kP_\ell\big)=\left(\Delta_2h+\frac{2k+p-1}{r}\,\partial_rh+\frac{2\ell+q-1}{\rho}\,\partial_{\rho}h\right)P_kP_\ell.
\end{equation}
It is clear that the assertion is true in the case $n=1$. Assume now that the identity holds for some natural number $n$. We thus get
\begin{multline*}
\Delta_{\underline X}^{n+1}\big(hP_kP_\ell\big)=\sum_{\substack{j_1+j_2\le n\\j_1,j_2\ge 0}}\binom{n}{j_1,j_2,n-j_1-j_2}D(j_1,j_2)\\
\times\Delta_{\underline X}\left(\left(r^{-1}\partial_r\right)^{j_1}\left(\rho^{-1}\partial_{\rho}\right)^{j_2}\Delta_2^{n-j_1-j_2}hP_kP_\ell\right).
\end{multline*}
By statement {\rm(i)} of Lemma \ref{mopedp} we obtain
\begin{multline*}\label{segueq}
\Delta_2\left(r^{-1}\partial_r\right)^{j_1}\left(\rho^{-1}\partial_{\rho}\right)^{j_2}\Delta_2^{n-j_1-j_2}h=\left(r^{-1}\partial_r\right)^{j_1}\left(\rho^{-1}\partial_{\rho}\right)^{j_2}\Delta_2^{n+1-j_1-j_2}h\\
-2j_1\left(r^{-1}\partial_r\right)^{j_1+1}\left(\rho^{-1}\partial_{\rho}\right)^{j_2}\Delta_2^{n-j_1-j_2}h-2j_2\left(r^{-1}\partial_r\right)^{j_1}\left(\rho^{-1}\partial_{\rho}\right)^{j_2+1}\Delta_2^{n-j_1-j_2}h.
\end{multline*}
This equality and (\ref{primeeq}) imply that 
\begin{multline*}
D(j_1,j_2)\Delta_{\underline X}\left(\left(r^{-1}\partial_r\right)^{j_1}\left(\rho^{-1}\partial_{\rho}\right)^{j_2}\Delta_2^{n-j_1-j_2}hP_kP_\ell\right)=\\
\bigg(D(j_1,j_2)\left(r^{-1}\partial_r\right)^{j_1}\left(\rho^{-1}\partial_{\rho}\right)^{j_2}\Delta_2^{n+1-j_1-j_2}h+D(j_1+1,j_2)\left(r^{-1}\partial_r\right)^{j_1+1}\left(\rho^{-1}\partial_{\rho}\right)^{j_2}\Delta_2^{n-j_1-j_2}h\\
+D(j_1,j_2+1)\left(r^{-1}\partial_r\right)^{j_1}\left(\rho^{-1}\partial_{\rho}\right)^{j_2+1}\Delta_2^{n-j_1-j_2}h\bigg)P_kP_\ell.
\end{multline*}
Therefore
\begin{equation}\label{iguabru}
\Delta_{\underline X}^{n+1}\big(hP_kP_\ell\big)=\big(T_1+T_2+T_3\big)P_kP_\ell,
\end{equation}
where
\[T_1=\sum_{\substack{j_1+j_2\le n\\j_1,\,j_2\ge 0}}\binom{n}{j_1,j_2,n-j_1-j_2}D(j_1,j_2)\left(r^{-1}\partial_r\right)^{j_1}\left(\rho^{-1}\partial_{\rho}\right)^{j_2}\Delta_2^{n+1-j_1-j_2}h,\]
\[T_2=\sum_{\substack{j_1+j_2\le n+1\\j_1\ge 1,\,j_2\ge 0}}\binom{n}{j_1-1,j_2,n+1-j_1-j_2}D(j_1,j_2)\left(r^{-1}\partial_r\right)^{j_1}\left(\rho^{-1}\partial_{\rho}\right)^{j_2}\Delta_2^{n+1-j_1-j_2}h,\]
\[T_3=\sum_{\substack{j_1+j_2\le n+1\\j_1\ge 0,\,j_2\ge 1}}\binom{n}{j_1,j_2-1,n+1-j_1-j_2}D(j_1,j_2)\left(r^{-1}\partial_r\right)^{j_1}\left(\rho^{-1}\partial_{\rho}\right)^{j_2}\Delta_2^{n+1-j_1-j_2}h.\]
Observe that set $\{(j_1,j_2):\,j_1+j_2\le n+1,\,j_1,j_2\ge 0\}$ can be expressed as the union of the disjoint sets $\{(0,0)\}$, $\{(n+1,0)\}$, $\{(0,n+1)\}$, $\{(j,0):\,1\le j\le n\}$, $\{(0,j):\,1\le j\le n\}$, $\{(j,n+1-j):\,1\le j\le n\}$ and $\{(j_1,j_2):\,j_1+j_2\le n,\,j_1,j_2\ge 1\}$. Taking this into account it is easy to verify that (\ref{iguabru}) equals 
\[\left(\sum_{\substack{j_1+j_2\le n+1\\j_1,j_2\ge 0}}\binom{n+1}{j_1,j_2,n+1-j_1-j_2}D(j_1,j_2)\left(r^{-1}\partial_r\right)^{j_1}\left(\rho^{-1}\partial_{\rho}\right)^{j_2}\Delta_2^{n+1-j_1-j_2}h\right)P_kP_\ell.\]
Thus proving the assertion for $n+1$. 
\end{proof}

\begin{rem}\label{com2}
Theorem \ref{Ftthmbiaxial} yields biaxial monogenic functions, i.e. monogenic functions of the form 
\[\bigl(A(r,\rho)+\underline\omega\,\underline\nu\,B(r,\rho)\bigr)P_k(\underline x)P_{\ell}(\underline y)\] 
or
\[\bigl(\underline\omega\,C(r,\rho)+\underline\nu\,D(r,\rho)\bigr)P_k(\underline x)P_{\ell}(\underline y),\] 
where $A$, $B$, $C$, $D$ are $\mathbb R$-valued continuously differentiable functions in $\mathbb R^2$ $($see $\cite{JaS,LB,S2})$. A direct computation shows that the pairs $(A,B)$ and $(C,D)$ satisfy the following Vekua-type systems
\begin{alignat*}{2}
\left\{\begin{aligned}
\partial_{r}A+\partial_{\rho}B&=-\frac{2\ell+q-1}{\rho}B\\
\partial_{\rho}A-\partial_{r}B&=\frac{2k+p-1}{r}B,
\end{aligned}\right.
\qquad
\left\{\begin{aligned}
\partial_{r}C+\partial_{\rho}D&=-\frac{2k+p-1}{r}C-\frac{2\ell+q-1}{\rho}D\\
\partial_{\rho}C-\partial_{r}D&=0.
\end{aligned}\right.
\end{alignat*}
\end{rem} 

\noindent We now come to our first main result that generalizes \cite{DS2} to the biaxial case. 
\begin{thm}\label{Ftthhigh}
Suppose that $w(z,\overline z)=u(x,y)+iv(x,y)$ is a $\mathbb C$-valued function satisfying the equation $(\ref{initfucnt})$ in the open set $\,\Xi\subset\{(x,y)\in\mathbb R^2:\;x,y>0\}$. If $p$ and $q$ are odd, then the functions
\begin{equation*}
\mathsf{Ft}_{p,q}^{\mu,+}\left[w(z,\overline z),P_k(\underline x),P_\ell(\underline y)\right](\underline X)=\Delta_{\underline X}^{\mu+k+\ell+\frac{m-2}{2}}\Big[\big(u(r,\rho)+\underline\omega\,\underline\nu\,v(r,\rho)\big)P_k(\underline x)P_\ell(\underline y)\Big],
\end{equation*}
\begin{equation*}
\mathsf{Ft}_{p,q}^{\mu,-}\left[w(z,\overline z),P_k(\underline x),P_\ell(\underline y)\right](\underline X)=\Delta_{\underline X}^{\mu+k+\ell+\frac{m-2}{2}}\Big[\big(\underline\omega\,u(r,\rho)+\underline\nu\,v(r,\rho)\big)P_k(\underline x)P_\ell(\underline y)\Big]
\end{equation*}
are monogenic in $\Omega=\big\{\underline X=\big(\underline x,\underline y\big)\in\mathbb R^{m}:\;(r,\rho)\in\Xi\big\}$.
\end{thm}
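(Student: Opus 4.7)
The plan is to verify directly that $\partial_{\underline X}\mathsf{Ft}^{\mu,\pm}_{p,q}[w,P_k,P_\ell]=0$, reducing the problem to a Vekua-type system. Set $N=\mu+k+\ell+\tfrac{m-2}{2}\in\mathbb N_0$ (which uses that $p,q$ are odd). By Remark \ref{rempolins} I may assume $P_k,P_\ell$ have definite parity. Applying Lemma \ref{mainident} termwise to the two summands in the definition of $\mathsf{Ft}^{\mu,+}_{p,q}$ rewrites it as $(\tilde u+\underline\omega\underline\nu\,\tilde v)P_kP_\ell$, with
\begin{align*}
\tilde u &= \sum_{j_1+j_2\le N}cD(j_1,j_2)L_r^{j_1}L_\rho^{j_2}\Delta_2^{N-j_1-j_2}u,\\
\tilde v &= \sum_{j_1+j_2\le N}cD(j_1,j_2)R_r^{j_1}R_\rho^{j_2}\Delta_2^{N-j_1-j_2}v,
\end{align*}
where $L_r=r^{-1}\partial_r$, $R_r=\partial_r r^{-1}$ (analogously for $\rho$), and $c=\binom{N}{j_1,j_2,N-j_1-j_2}$. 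The case $\mathsf{Ft}^{\mu,-}_{p,q}$ is handled analogously, using $(s_1,s_2)=(1,0)$ and $(0,1)$ in Lemma \ref{mainident}.

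The computation that produces Remark \ref{com2} shows, more generally, that $(\tilde u+\underline\omega\underline\nu\tilde v)P_kP_\ell$ is monogenic if and only if $(\tilde u,\tilde v)$ satisfies the Vekua system
\begin{align*}
\partial_r\tilde u+\partial_\rho\tilde v&=-\tfrac{2\ell+q-1}{\rho}\tilde v,\\
\partial_\rho\tilde u-\partial_r\tilde v&=\tfrac{2k+p-1}{r}\tilde v,
\end{align*}
so it suffices to verify these two identities. To do so I would use Lemma \ref{mopedp}(iii), $\partial_r L_r^{j_1}=R_r^{j_1}\partial_r$, to move $\partial_r$ through the $u$-sum, and Lemma \ref{mopedp}(iv), $\partial_\rho R_\rho^{j_2}=L_\rho^{j_2}\partial_\rho-2j_2\rho^{-1}R_\rho^{j_2}$, which produces an extra correction term from the $v$-sum that combines with the Vekua right-hand side via the factorisation $D(j_1,j_2+1)=D(j_1,j_2)(2\ell+q-2j_2-1)$. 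After this manipulation the first Vekua equation becomes
\begin{equation*}
\sum cD(j_1,j_2)R_r^{j_1}L_\rho^{j_2}\Delta_2^{N-j_1-j_2}(\partial_r u+\partial_\rho v)+\rho^{-1}\sum cD(j_1,j_2+1)R_r^{j_1}R_\rho^{j_2}\Delta_2^{N-j_1-j_2}v=0,
\end{equation*}
and the second Vekua equation rewrites symmetrically with the roles of $(r,\rho)$ and $(k,p)\leftrightarrow(\ell,q)$ swapped.

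The proof concludes with two counting observations. First, $D(j_1,j_2)\ne 0$ forces $j_1\le k+\tfrac{p-1}{2}$ and $j_2\le\ell+\tfrac{q-1}{2}$, hence $j_1+j_2\le N-\mu$, so every summand in the first sum contains $\Delta_{x,y}^\mu$ acting on $\partial_r u+\partial_\rho v=2\operatorname{Re}\partial_z w$; hypothesis (\ref{initfucnt}) then kills each summand. Second, $D(j_1,j_2+1)\ne 0$ forces the sharper bound $j_2+1\le\ell+\tfrac{q-1}{2}$, whence $j_1+j_2\le N-\mu-1$, so every summand in the second sum contains $\Delta_2^{\mu+1}v$; but $\Delta_{x,y}^{\mu+1}w=4\partial_{\bar z}(\partial_z\Delta_{x,y}^\mu w)=0$, so $\Delta_2^{\mu+1}v=0$ and these summands vanish too. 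The same argument handles the second Vekua equation and the case $\mathsf{Ft}^{\mu,-}_{p,q}$. The main technical obstacle is the parity-sign bookkeeping underlying the computations of $\partial_{\underline x}(\underline\omega\underline\nu\tilde v P_kP_\ell)$ and $\partial_{\underline y}(\underline\omega\underline\nu\tilde v P_kP_\ell)$, where the fixed parity of $P_k,P_\ell$ controls the commutation of $\underline x,\underline y$ past the polynomial factors; but this is precisely the computation already carried out in Remark \ref{com2} and poses no essentially new difficulty.
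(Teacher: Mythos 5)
Your proposal is correct and follows essentially the same route as the paper: apply Lemma~\ref{mainident}, reduce monogenicity to the Vekua system of Remark~\ref{com2}, and close the argument with Lemma~\ref{mopedp}(iii)--(iv) together with hypothesis~(\ref{initfucnt}). The only cosmetic difference is that the paper first observes that the constraints $D(j_1,j_2)\neq 0$ and $\Delta_2^{\mu+1}w=0$ leave exactly one surviving term $j_1=k+\tfrac{p-1}{2},\,j_2=\ell+\tfrac{q-1}{2}$ (yielding the explicit closed form for $A,B$), whereas you keep the full sums $\tilde u,\tilde v$ and let the same vanishing conditions kill everything after the Vekua manipulation -- the two are equivalent.
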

\begin{proof}
We use Lemma \ref{mainident} to compute $\mathsf{Ft}_{p,q}^{\mu,+}$ in closed form. First, note that function $w$ also satisfies the equation $\Delta_2^{\mu+1}w=0$, thus 
\[\Delta_2^{\mu+k+\ell+\frac{m-2}{2}-j_1-j_2}w=0\;\;\text{for}\;\; j_1+j_2\le k+\ell+(m-4)/2.\] 
Since $p$ and $q$ are odd we also have that $D(j_1,j_2)=0$ for $j_1\ge k+(p+1)/2$ or $j_2\ge \ell+(q+1)/2$. It follows that for $n=\mu+k+\ell+(m-2)/2$ the only term in (\ref{indefund}) which does not vanish corresponds to the case $j_1=k+(p-1)/2$, $j_2=\ell+(q-1)/2$.
Therefore
\begin{multline*}
\mathsf{Ft}_{p,q}^{\mu,+}\left[w(z,\overline z),P_k(\underline x),P_\ell(\underline y)\right](\underline X)=(2k+p-1)!!(2\ell+q-1)!!\\
\times\binom{\mu+k+\ell+\frac{m-2}{2}}{k+\frac{p-1}{2},\ell+\frac{q-1}{2},\mu}\bigl(A(r,\rho)+\underline\omega\,\underline\nu\,B(r,\rho)\bigr)P_k(\underline x)P_{\ell}(\underline y),
\end{multline*}
with
\[A=\left(r^{-1}\partial_r\right)^{k+\frac{p-1}{2}}\left(\rho^{-1}\partial_{\rho}\right)^{\ell+\frac{q-1}{2}}\Delta_2^{\mu}u,\;\; B=\left(\partial_r\,r^{-1}\right)^{k+\frac{p-1}{2}}\left(\partial_{\rho}\,\rho^{-1}\right)^{\ell+\frac{q-1}{2}}\Delta_2^{\mu}v.\]
It thus remains to prove that $(A,B)$ fulfills the first system of Remark \ref{com2}. Using statement {\rm(iii)} of Lemma \ref{mopedp} and the fact that $w$ satisfies (\ref{initfucnt}) we obtain
\[\partial_{r}A=\left(\partial_r\,r^{-1}\right)^{k+\frac{p-1}{2}}\left(\rho^{-1}\partial_{\rho}\right)^{\ell+\frac{q-1}{2}}\partial_{r}\Delta_2^{\mu}u=-\left(\partial_r\,r^{-1}\right)^{k+\frac{p-1}{2}}\left(\rho^{-1}\partial_{\rho}\right)^{\ell+\frac{q-1}{2}}\partial_{\rho}\Delta_2^{\mu}v.\]
Hence we get
\begin{align*}
\partial_{r}A+\partial_{\rho}B&=-\left(\partial_r\,r^{-1}\right)^{k+\frac{p-1}{2}}\left(\left(\rho^{-1}\partial_{\rho}\right)^{\ell+\frac{q-1}{2}}\partial_{\rho}\Delta_2^{\mu}v-\partial_{\rho}\left(\partial_{\rho}\,\rho^{-1}\right)^{\ell+\frac{q-1}{2}}\Delta_2^{\mu}v\right)\\
&=-\frac{2\ell+q-1}{\rho}\left(\partial_r\,r^{-1}\right)^{k+\frac{p-1}{2}}\left(\partial_{\rho}\,\rho^{-1}\right)^{\ell+\frac{q-1}{2}}\Delta_2^{\mu}v,
\end{align*}
where we have also used statement {\rm(iv)} of Lemma \ref{mopedp}. In a similar fashion, it can be shown that
\begin{equation*}
\partial_{\rho}A-\partial_{r}B=\frac{2k+p-1}{r}\left(\partial_r\,r^{-1}\right)^{k+\frac{p-1}{2}}\left(\partial_{\rho}\,\rho^{-1}\right)^{\ell+\frac{q-1}{2}}\Delta_2^{\mu}v.
\end{equation*}
The proof of $\mathsf{Ft}_{p,q}^{\mu,-}$ goes along the same lines as that of $\mathsf{Ft}_{p,q}^{\mu,+}$. Indeed, it follows from Lemma \ref{mainident} that
\begin{multline*}
\mathsf{Ft}_{p,q}^{\mu,-}\left[w(z,\overline z),P_k(\underline x),P_\ell(\underline y)\right](\underline X)=(2k+p-1)!!(2\ell+q-1)!!\\
\times\binom{\mu+k+\ell+\frac{m-2}{2}}{k+\frac{p-1}{2},\ell+\frac{q-1}{2},\mu}\bigl(\underline\omega\,C(r,\rho)+\underline\nu\,D(r,\rho)\bigr)P_k(\underline x)P_{\ell}(\underline y),
\end{multline*}
with
\[C=\left(\partial_r\,r^{-1}\right)^{k+\frac{p-1}{2}}\left(\rho^{-1}\partial_{\rho}\right)^{\ell+\frac{q-1}{2}}\Delta_2^{\mu}u,\;\; D=\left(r^{-1}\partial_r\right)^{k+\frac{p-1}{2}}\left(\partial_{\rho}\,\rho^{-1}\right)^{\ell+\frac{q-1}{2}}\Delta_2^{\mu}v.\]
One can check, using statements {\rm(iii)} and {\rm(iv)} of Lemma \ref{mopedp} as well as equation (\ref{initfucnt}), that $(C,D)$ satisfies the second system of Remark \ref{com2}. 
\end{proof}

\section{Fueter's theorem with general homogeneous factors}

We arrive at the third and last section of the paper where we shall prove our main result. In the proof we use Theorem \ref{Ftthhigh} and the well-known Fischer decomposition (see \cite{DSS}):

\begin{thm}[Fischer decomposition]
Every homogeneous polynomial $H_K(\underline X)$ of degree $K$ in $\mathbb R^m$ admits the following decomposition
\[H_K(\underline X)=\sum_{n=0}^K\underline X^nP_{K-n}(\underline X),\]
where each $P_{K-n}(\underline X)$ is a homogeneous monogenic polynomial. 
\end{thm}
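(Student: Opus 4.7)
The plan is to proceed by induction on the degree $K$. The base case $K=0$ is immediate: any constant lies in the kernel of $\partial_{\underline X}$, so one simply sets $P_0 = H_0$. For the inductive step, it suffices to establish the single-step decomposition
\[\mathcal{P}_K = \mathcal{M}_K \oplus \underline X \cdot \mathcal{P}_{K-1},\]
where $\mathcal{P}_K$ denotes the space of $\mathbb{R}_m$-valued homogeneous polynomials of degree $K$ and $\mathcal{M}_K = \ker(\partial_{\underline X}) \cap \mathcal{P}_K$ is the space of homogeneous monogenic polynomials of degree $K$. Once this holds, writing $H_K = M_K + \underline X\, G_{K-1}$ with $M_K \in \mathcal{M}_K$ and $G_{K-1} \in \mathcal{P}_{K-1}$, and then applying the induction hypothesis to $G_{K-1}$, produces the telescoping sum claimed in the theorem, with $P_K = M_K$.

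To prove this single-step decomposition, I would introduce the Fischer inner product on $\mathcal{P}_K$. For scalar polynomials the natural pairing $\langle p, q \rangle_F := [\,p(\partial_{\underline X})\, q(\underline X)\,]_{\underline X = 0}$ is well known to be positive definite, since on monomials it satisfies $\langle X^\alpha, X^\beta\rangle_F = \alpha!\,\delta_{\alpha\beta}$. In the Clifford-valued setting one adapts it by inserting a Clifford conjugation on one argument and then extracting the scalar part, so that the pairing remains positive definite on $\mathcal{P}_K$. The crucial computation is then the adjointness identity
\[\langle \underline X\, P,\, Q \rangle_F = \langle P,\, -\partial_{\underline X} Q \rangle_F,\]
valid for all $P \in \mathcal{P}_{K-1}$ and $Q \in \mathcal{P}_K$. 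Given adjointness, the orthogonal complement of $\underline X \cdot \mathcal{P}_{K-1}$ inside $\mathcal{P}_K$ is exactly $\ker(\partial_{\underline X}|_{\mathcal{P}_K}) = \mathcal{M}_K$, which yields the desired orthogonal direct sum.

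The main obstacle will be the careful setup of the Fischer pairing in the Clifford-valued case: because $\mathbb{R}_m$ is non-commutative, one must be cautious with the order of factors and with the conjugation convention in order to preserve positive definiteness and to obtain the correct sign in the adjointness formula. Once this has been handled, verifying adjointness of $\underline X$ and $-\partial_{\underline X}$ reduces to the Leibniz rule $\partial_{X_j}(X_j f) = f + X_j\, \partial_{X_j} f$ combined with the Clifford relations $e_j^2 = -1$ and $e_j e_k + e_k e_j = 0$ for $j \neq k$, and extends by bilinearity to all of $\mathcal{P}_K$. The single-step decomposition then follows from the standard Hilbert-space argument, and iterating it $K$ times produces the Fischer decomposition in the stated form.
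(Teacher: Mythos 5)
The paper states the Fischer decomposition without proof, simply citing \cite{DSS}, and the argument you outline is precisely the standard one found there: introduce the Fischer inner product $\langle P,Q\rangle_F=\bigl[\overline{P}(\partial_{\underline X})Q(\underline X)\bigr]_0\big|_{\underline X=0}$ with Clifford conjugation and scalar part, verify positive definiteness and the adjointness $\langle \underline X P, Q\rangle_F=\langle P,-\partial_{\underline X}Q\rangle_F$ (using $\overline{\underline X}=-\underline X$), obtain the orthogonal splitting $\mathcal{P}_K=\mathcal{M}_K\oplus\underline X\,\mathcal{P}_{K-1}$, and iterate. Your proposal is correct and matches the cited approach.
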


\begin{thm}
Let $w(\overline z)=u(x,y)+iv(x,y)$ be an antiholomorphic function in the open set $\,\Xi\subset\{(x,y)\in\mathbb R^2:\;x,y>0\}$. Suppose that $H_k(\underline x):\mathbb R^{p}\rightarrow\mathbb{R}_{p}$ and $H_\ell(\underline y):\mathbb R^{q}\rightarrow\mathbb{R}_{q}$ are homogeneous polynomials. If $p$ and $q$ are odd, then the functions 
\[\mathsf{Ft}_{p,q}^{+}\left[w(\overline z),H_k(\underline x),H_\ell(\underline y)\right](\underline X)\quad\text{and}\quad\mathsf{Ft}_{p,q}^{-}\left[w(\overline z),H_k(\underline x),H_\ell(\underline y)\right](\underline X)\] 
are monogenic in $\Omega=\big\{\underline X=\big(\underline x,\underline y\big)\in\mathbb R^{m}:\;(r,\rho)\in\Xi\big\}$.
\end{thm}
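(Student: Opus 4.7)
\emph{Plan.} I would apply the Fischer decomposition to each factor, reduce by linearity to a single product, and recognize the outcome as an instance of Theorem~\ref{Ftthhigh}. Writing
\[
H_k(\underline x)=\sum_{n=0}^{k}\underline x^{\,n}P_{k-n}(\underline x),\qquad H_\ell(\underline y)=\sum_{n=0}^{\ell}\underline y^{\,n}P_{\ell-n}(\underline y)
\]
with $P_{k-n},P_{\ell-n}$ homogeneous monogenic, and using $\underline x^{2}=-r^{2}$, $\underline x=r\underline\omega$ together with the analogous identities for $\underline y$, each summand of $H_k(\underline x)H_\ell(\underline y)$ becomes a scalar multiple of $r^{2\alpha+s_1}\rho^{2\beta+s_2}\,\underline\omega^{s_1}P_{k'}(\underline x)\,\underline\nu^{s_2}P_{\ell'}(\underline y)$ with $s_1,s_2\in\{0,1\}$, $k'=k-2\alpha-s_1$ and $\ell'=\ell-2\beta-s_2$. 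By Remark~\ref{rempolins} I may further split $P_{k'}=P_{k'}^{+}+P_{k'}^{-}$ into monogenic components of fixed parity; each of these commutes with $\underline\nu^{s_2}$ up to a sign determined by parity, which lets me collect every Clifford factor on the left of $P_{k'}(\underline x)P_{\ell'}(\underline y)$.

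\emph{Matching with $\mathsf{Ft}^{\mu,\pm}_{p,q}$.} A direct calculation using $\underline\omega^{2}=\underline\nu^{2}=-1$ and $\underline\omega\underline\nu=-\underline\nu\underline\omega$ (which follows from $\mathbb R^{p}\perp\mathbb R^{q}$) gives $(u+\underline\omega\underline\nu\,v)\underline\omega^{s_1}\underline\nu^{s_2}$ equal to $u+\underline\omega\underline\nu\,v$, $\underline\omega\,u+\underline\nu\,v$, $-\underline\omega\,v+\underline\nu\,u$, $-v+\underline\omega\underline\nu\,u$ in the cases $(s_1,s_2)=(0,0),(1,0),(0,1),(1,1)$ respectively. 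Absorbing the scalar factor $r^{2\alpha+s_1}\rho^{2\beta+s_2}$ into $u,v$, the reduced term coincides (up to a sign) with $\mathsf{Ft}^{\mu,+}_{p,q}[\hat w,P_{k'},P_{\ell'}]$ in the cases $(0,0)$ and $(1,1)$, and with $\mathsf{Ft}^{\mu,-}_{p,q}[\hat w,P_{k'},P_{\ell'}]$ in the cases $(1,0)$ and $(0,1)$, where $\hat w=c\,x^{A}y^{B}w(\overline z)$ with $c\in\{1,i\}$, $A=2\alpha+s_1$, $B=2\beta+s_2$ and $\mu=A+B$. The exponent of $\Delta_{\underline X}$ matches, since $k+\ell+\frac{m-2}{2}=\mu+k'+\ell'+\frac{m-2}{2}$.

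\emph{Verifying $(\ref{initfucnt})$ and concluding.} It remains to check that $\partial_z\Delta_{x,y}^{\mu}\hat w=0$. Switching to complex variables $x=(z+\overline z)/2$ and $y=(z-\overline z)/(2i)$, the monomial $x^{A}y^{B}$ is a polynomial of degree $\mu=A+B$ in $z,\overline z$, so $\hat w=\sum_{j=0}^{\mu}z^{j}g_{j}(\overline z)$ with each $g_{j}$ antiholomorphic. Because $\Delta_{x,y}=4\partial_z\partial_{\overline z}$, the operator $\partial_z^{\mu}$ annihilates every summand with $j<\mu$ and sends $z^{\mu}g_{\mu}(\overline z)$ to $\mu!\,g_{\mu}(\overline z)$; thus $\Delta_{x,y}^{\mu}\hat w$ is antiholomorphic and $\partial_z$ kills it. Theorem~\ref{Ftthhigh} then produces monogenicity for every reduced term, and summing over the Fischer decompositions and over the parity components yields the monogenicity of $\mathsf{Ft}^{\pm}_{p,q}[w,H_k,H_\ell]$. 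I expect the principal difficulty to be combinatorial bookkeeping rather than analysis: correctly tracking signs and the swap between $\mathsf{Ft}^{+}$ and $\mathsf{Ft}^{-}$ across the four $(s_1,s_2)$ cases, while absorbing the parity sign produced by commuting $P_{k'}(\underline x)$ with $\underline\nu^{s_2}$.
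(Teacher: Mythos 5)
Your proposal is correct and follows essentially the same route as the paper's own proof: both apply the Fischer decomposition to $H_k$ and $H_\ell$, reduce by linearity to a single product $\underline x^{n_1}P_{k-n_1}(\underline x)\,\underline y^{n_2}P_{\ell-n_2}(\underline y)$, use Remark~\ref{rempolins} to handle the Clifford commutation between $\mathbb R_p$- and $\mathbb R_q$-valued factors, absorb the resulting scalar $r^{2\alpha+s_1}\rho^{2\beta+s_2}$ into a modified initial function $\hat w=c\,x^A y^B w(\overline z)$, check that $\hat w$ satisfies~(\ref{initfucnt}) with $\mu=n_1+n_2$, and invoke Theorem~\ref{Ftthhigh}. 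The only stylistic difference is that you organize the case analysis by the pair $(s_1,s_2)\in\{0,1\}^2$ and compute $(u+\underline\omega\underline\nu v)\underline\omega^{s_1}\underline\nu^{s_2}$ explicitly, whereas the paper splits on the parity of $n_1+n_2$ and packages the same signs into the auxiliary functions $h^{\pm}$; the two bookkeepings are equivalent, and your verification that $\partial_z\Delta_{x,y}^{\mu}\hat w=0$ (via $\partial_z^{\mu+1}\hat w=0$) matches the paper's.
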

\begin{proof}
We only prove the statement for function $\mathsf{Ft}_{p,q}^{+}$ since the proof for $\mathsf{Ft}_{p,q}^{-}$ is similar. Note that the Fischer decomposition ensures the existence of homogeneous monogenic polynomials $P_{k-n_1}(\underline x)$ and $P_{\ell-n_2}(\underline y)$ such that
\[H_k(\underline x)H_\ell(\underline y)=\sum_{n_1=0}^k\sum_{n_2=0}^{\ell}\underline x^{n_1}P_{k-n_1}(\underline x)\,\underline y^{n_2}P_{\ell-n_2}(\underline y).\]
This gives
\begin{multline*}
\mathsf{Ft}_{p,q}^{+}\left[w(\overline z),H_k(\underline x),H_\ell(\underline y)\right](\underline X)\\
=\sum_{n_1=0}^k\sum_{n_2=0}^{\ell}\Delta_{\underline X}^{k+\ell+\frac{m-2}{2}}\Big[\big(u(r,\rho)+\underline\omega\,\underline\nu\,v(r,\rho)\big)\underline x^{n_1}P_{k-n_1}(\underline x)\,\underline y^{n_2}P_{\ell-n_2}(\underline y)\Big].
\end{multline*}
It will thus be sufficient to prove the monogenicity of each term in the previous sum. On account of Remark \ref{rempolins} we may assume without loss of generality that $P_{k-n_1}(\underline x)$ takes values in $\mathbb{R}_{p}^+$ and hence 
\[\underline x^{n_1}P_{k-n_1}(\underline x)\,\underline y^{n_2}P_{\ell-n_2}(\underline y)=\underline x^{n_1}\underline y^{n_2}P_{k-n_1}(\underline x)P_{\ell-n_2}(\underline y).\] 
It is easy to verify that if $n_1+n_2$ is even, then
\begin{multline*}
\Delta_{\underline X}^{k+\ell+\frac{m-2}{2}}\Big[\big(u(r,\rho)+\underline\omega\,\underline\nu\,v(r,\rho)\big)\underline x^{n_1}\underline y^{n_2}P_{k-n_1}(\underline x)P_{\ell-n_2}(\underline y)\Big]\\
=\mathsf{Ft}_{p,q}^{n_1+n_2,+}\left[w(\overline z)h^+(x,y),P_{k-n_1}(\underline x),P_{\ell-n_2}(\underline y)\right](\underline X),
\end{multline*}
where $h^+(x,y)=\left\{\begin{array}{ll}(-1)^{\frac{n_1+n_2}{2}}x^{n_1}y^{n_2}&\text{for}\;\;n_1,n_2\;\;\text{even}\\(-1)^{\frac{n_1+n_2-2}{2}}i\,x^{n_1}y^{n_2}&\text{for}\;\;n_1,n_2\;\;\text{odd}.\end{array}\right.$

\noindent Similarly, if $n_1+n_2$ is odd, then
\begin{multline*}
\Delta_{\underline X}^{k+\ell+\frac{m-2}{2}}\Big[\big(u(r,\rho)+\underline\omega\,\underline\nu\,v(r,\rho)\big)\underline x^{n_1}\underline y^{n_2}P_{k-n_1}(\underline x)P_{\ell-n_2}(\underline y)\Big]\\
=\mathsf{Ft}_{p,q}^{n_1+n_2,-}\left[w(\overline z)h^-(x,y),P_{k-n_1}(\underline x),P_{\ell-n_2}(\underline y)\right](\underline X),
\end{multline*}
where $h^-(x,y)=\left\{\begin{array}{ll}(-1)^{\frac{n_1+n_2-1}{2}}x^{n_1}y^{n_2}&\text{for}\;\;n_1\;\text{odd},\; n_2\;\text{even}\\(-1)^{\frac{n_1+n_2-1}{2}}i\,x^{n_1}y^{n_2}&\text{for}\;\;n_1\;\text{even},\; n_2\;\text{odd}.\end{array}\right.$

\noindent Clearly, $h^{\pm}$ satisfies $\partial_z^{n_1+n_2+1}h^{\pm}=0$ and for that reason 
\[\partial_z^{n_1+n_2+1}\big(w(\overline z)h^{\pm}(x,y)\big)=w(\overline z)\partial_z^{n_1+n_2+1}h^{\pm}(x,y)=0.\]
Consequently, the functions $w(\overline z)h^{\pm}(x,y)$ are solutions of (\ref{initfucnt}) for $\mu=n_1+n_2$. The result now follows from Theorem \ref{Ftthhigh}.
\end{proof}
\noindent We conclude with some examples involving the homogeneous polynomials $H_k(\underline x)=\langle\underline x,\underline t\rangle^k$, $H_\ell(\underline y)=\langle\underline y,\underline s\rangle^{\ell}$, where $\underline t\in\mathbb R^p$ and $\underline s\in\mathbb R^q$ are arbitrary fixed vectors. In order to avoid too long computations we have chosen the cases $p=q=3$, $k=1,2$ and $\ell=1$.
\begin{multline*}
\mathsf{Ft}_{3,3}^{+}\left[\overline{z}^5,\langle\underline x,\underline t\rangle,\langle\underline y,\underline s\rangle\right](\underline X)=\frac{10}{r^3}\langle\underline x,\underline t\rangle\langle\underline y,\underline s\rangle+\frac{6\,\underline x\,\underline y}{r^5}\langle\underline x,\underline t\rangle\langle\underline y,\underline s\rangle-\frac{2\,\underline t\,\underline y}{r^3}\langle\underline y,\underline s\rangle\\
+\frac{(5r^2+3\rho^2)\underline x\,\underline s}{r^5}\langle\underline x,\underline t\rangle-\frac{(5r^2+\rho^2)\underline t\,\underline s}{r^3}
\end{multline*}
\begin{equation*}
\mathsf{Ft}_{3,3}^{+}\left[\overline{z}^8,\langle\underline x,\underline t\rangle,\langle\underline y,\underline s\rangle\right](\underline X)=10\langle\underline x,\underline t\rangle\langle\underline y,\underline s\rangle-2\,\underline t\,\underline y\langle\underline y,\underline s\rangle+2\,\underline x\,\underline s\langle\underline x,\underline t\rangle+(r^2-\rho^2)\underline t\,\underline s
\end{equation*}
\begin{multline*}
\mathsf{Ft}_{3,3}^{+}\left[\overline{z}^{10},\langle\underline x,\underline t\rangle,\langle\underline y,\underline s\rangle\right](\underline X)=140(r^2-\rho^2)\langle\underline x,\underline t\rangle\langle\underline y,\underline s\rangle-56\,\underline x\,\underline y\langle\underline x,\underline t\rangle\langle\underline y,\underline s\rangle\\-4(7r^2-5\rho^2)\underline t\,\underline y\langle\underline y,\underline s\rangle+4(5r^2-7\rho^2)\underline x\,\underline s\langle\underline x,\underline t\rangle+(5r^4-14r^2\rho^2+5\rho^4)\underline t\,\underline s
\end{multline*}
\begin{multline*}
\mathsf{Ft}_{3,3}^{-}\left[i\overline{z}^6,\langle\underline x,\underline t\rangle,\langle\underline y,\underline s\rangle\right](\underline X)=\frac{2\,\underline x}{\rho^3}\langle\underline x,\underline t\rangle\langle\underline y,\underline s\rangle-\frac{(3r^2+5\rho^2)\underline y}{\rho^5}\langle\underline x,\underline t\rangle\langle\underline y,\underline s\rangle\\
+\frac{(r^2+5\rho^2)}{\rho^3}\left(\underline t\langle\underline y,\underline s\rangle+\underline s\langle\underline x,\underline t\rangle\right)
\end{multline*}
\begin{equation*}
\mathsf{Ft}_{3,3}^{-}\left[\overline{z}^9,\langle\underline x,\underline t\rangle,\langle\underline y,\underline s\rangle\right](\underline X)=2(\underline x-\underline y)\langle\underline x,\underline t\rangle\langle\underline y,\underline s\rangle+(r^2-\rho^2)\left(\underline t\langle\underline y,\underline s\rangle+\underline s\langle\underline x,\underline t\rangle\right)
\end{equation*}
\begin{multline*}
\mathsf{Ft}_{3,3}^{-}\left[\overline{z}^{11},\langle\underline x,\underline t\rangle^2,\langle\underline y,\underline s\rangle\right](\underline X)=8\left(5\underline x-7\underline y\right)\langle\underline x,\underline t\rangle^2\langle\underline y,\underline s\rangle-4(7r^2-5\rho^2)\vert\underline t\vert^2\underline y\langle\underline y,\underline s\rangle\\
+4(5r^2-7\rho^2)\left(2\underline t\langle\underline x,\underline t\rangle\langle\underline y,\underline s\rangle+\vert\underline t\vert^2\underline x\langle\underline y,\underline s\rangle+\underline s\langle\underline x,\underline t\rangle^2\right)+(5r^4-14r^2\rho^2+5\rho^4)\vert\underline t\vert^2\underline s
\end{multline*}

\subsection*{Acknowledgments}

D. Pe\~na Pe\~na acknowledges the support of a Postdoctoral Fellowship given by Istituto Nazionale di Alta Matematica (INdAM) and cofunded by Marie Curie actions.

\end{document}